\newtheorem{theorem}{Theorem}
\newtheorem{proposition}{Proposition}
\newtheorem{corollary}{Corollary}
\newtheorem{example}{Example}
\newtheorem{remark}{Remark}
\newtheorem{lemma}{Lemma}
\newtheorem{definition}{Definition}
\newtheorem{notation}{Notation}
\begin{document}
\title{Unipotent group actions on affine varieties}
\author{H.Derksen$^{\ast }$}
\address{Harm Derksen\\
Department of Mathematics\\
University of Michigan, \\
Ann Arbor, MI 48109 \\
USA\\
hderksen@umich.edu}
\thanks{$^{\ast }$Funded by NSF CAREER grant, DMS 0349019, Invariant Theory,
Algorithms and Applications. \ }
\author{A.van den Essen}
\address{Arno van den Essen \\
Radboud University Nijmegen\\
Toernooiveld 1, The Netherlands\\
a.vandenessen@math.ru.nl \ }
\author{D. R.Finston}
\address{David R. Finston\\
Department of Mathematical Sciences\\
New Mexico State University, \\
Las Cruces, NM 88003 USA\\
dfinston@nmsu.edu \ }
\author{S.Maubach$^{\ast \ast }$ }
\address{Stefan Maubach \\
Radboud University Nijmegen\\
Toernooiveld 1, The Netherlands\\
s.maubach@math.ru.nl}
\thanks{$^{\ast \ast }$Funded by Veni-grant of council for the physical
sciences, Netherlands Organisation for scientific research (NWO)}
\maketitle

\begin{abstract}
Algebraic actions of unipotent groups $U$ actions on affine $k-$varieties $X$
($k$ an algebraically closed field of characteristic 0) for which the
algebraic quotient $X//U$ has small dimension are considered$.$ In case $X$
is factorial, $O(X)^{\ast }=k^{\ast },$ and $X//U$ is one-dimensional, it is
shown that $O(X)^{U}$=$k[f]$, and if some point in $X$ has trivial isotropy,
then $X$ is $U$ equivariantly isomorphic to $U\times A^{1}(k).$ \ The main
results are given distinct geometric and algebraic proofs. Links to the
Abhyankar-Sathaye conjecture and a new equivalent formulation of the Sathaye
conjecture are made.
\end{abstract}

\section{Preliminaries and Introduction}

Throughout, $k$ will denote a field of characteristic zero, $k^{[n]}$ the
polynomial ring in $n$ variables over $k,$ and $U$ a unipotent algebraic
group over $k.$ Our interest is in algebraic actions of such $U$ on affine $%
k-$varieties $X$ (equivalently on their coordinate rings $\mathcal{O}(X)).$
An algebraic action of the one dimensional unipotent group $\mathcal{G}%
_{a}=(k,+)$ is conveniently described through the action of a locally
nilpotent derivation $D$ of $\mathcal{O}(X)$. Specifically, for $u\in 
\mathcal{G}_{a}=k$, we have the automorphism $u^{\ast }$ acting on $\mathcal{%
O}(X)$ and it is well-known (see for example \cite{essenboek} page 16-17)
that there exists a unique locally nilpotent derivation $D:\mathcal{O}%
(X)\longrightarrow \mathcal{O}(X)$ such that $u^{\ast }=\exp (uD)$. (One can
obtain $D$ by taking $D(f)=\frac{u^{\ast }f-f}{u}|_{u=0}.$) \ Similarly, if $%
\mathcal{G}_{a}^{n}$ acts on $X$, then we have for each component $\mathcal{G%
}_{a}$-action a locally nilpotent derivation $D_{i}$, and for each element $%
u=(u_{1},\ldots ,u_{n})\in \mathcal{G}_{a}^{n}$ we have the derivation $%
D:=u_{1}D_{1}+\ldots +u_{n}D_{n}$. If the action is faithful, there is a
canonical isomorphism of $\func{Lie}(\mathcal{G}_{a}^{n})$ with $%
kD_{1}+\ldots +kD_{n}$. In this case, the $D_{i}$ commute.

The situation is similar for a general unipotent group action $U\times
X\longrightarrow X$. \ Because the action is algebraic, each $f\in \mathcal{O%
}(X)$ is contained in a finite dimensional $U$ stable subspace $V_{f}$ on
which $U$ acts by linear transformations. \ Since $U$ is unipotent, for each 
$u\in U,$ $u^{\ast }-id$ is nilpotent on $V_{f}$ , so that $\ln
(u)(g)=\tsum\limits_{j=1}^{\infty }\frac{(u^{\ast }-id)^{j}g}{j}$ is a
finite sum for all $g\in $ $V_{f}.$ \ One checks that $D_{u}\equiv \ln (u)$
is a (locally nilpotent) derivation of $\mathcal{O}(X)$ and $u^{\ast }=\exp
(D_{u})$ . If the action is faithful, i.e. $U\rightarrow Aut(X)$ is
injective, there is a canonical isomorphism of $\func{Lie}(U)$ with $%
\{D_{u}~|~u\in U\}$. In fact, $\func{Lie}(U)=kD_{1}+\ldots +kD_{m}$ ($m=\dim
(U))$ for some locally nilpotent derivations $D_{i}$. In general the $D_{i}$
do not commute. In fact, all of them commute if and only if $U=\mathcal{G}%
_{a}^{m}$.

Two useful facts about unipotent group actions on quasiaffine varieties $V$
can be immediately derived from these observations:

\begin{enumerate}
\item Because each $u\in U$ acts via a locally nilpotent derivation of $%
O(V), $ the ring of invariants $O(V)^{U}$ is the intersection of the kernels
of locally nilpotent derivations.

\item Since kernels of locally nilpotent derivations are factorially closed,
their intersection is too, i.e. $O(V)^{U}$ is factorially closed. In
particular if $O(V)$ is a UFD\ so is $O(V)^{U}.$
\end{enumerate}

We will use the fact that $U$ is a special group in the sense of Serre. \
This means that a $U$ action which is locally trivial for the \'{e}tale
topology is locally trivial for the Zariski topology. \ If $G$ is a group
acting on a variety $X$, we denote by $X//G$ the algebraic quotient $X//G:=$%
\textbf{Spec }$\mathcal{O}(X)^{G}$ and by $X/G$ the geometric quotient (when
it exists). By a free action we mean an action for which the isotropy
subgroup of each element consists only of the identity. \ (A free action is
faithful.)

The paper is organized as follows: Section 2 contains some examples which
illustrate the main results and clarify their hypotheses. The main results
are proved in Section 3 from a geometric perspective, and Section 4 gives
them an algebraic interpretation. \ (The algebraic and geometric viewpoint
both have their merits: the geometric viewpoint lends itself to possible
generalizations, while the algebraic proofs are constructive and can be more
easily used in algorithms.) In section 5 we elaborate on some implications
of the main results for the Sathaye conjecture, and on the motivation for
studying this problem.

\section{Examples}

\label{Sec5}

The following examples are valuable in various parts of the text.

\begin{example}
\label{Ex1} Let $X=k^{3}$, and $U:=\{u_{a,b,c}~|~a,b,c\in k\}$ where 
\begin{equation*}
u_{a,b,c}:=%
\begin{pmatrix}
1 & a & b \\ 
0 & 1 & c \\ 
0 & 0 & 1%
\end{pmatrix}%
\end{equation*}%
a unipotent group acting by $u_{a,b,c}(x,y,z)=(x+a,y+az+b,z+c)$ (which
indeed is an algebraic action). For each $(a,b,c)\in k^{3}$ we thus have an
automorphism, and its associated derivation on $k[X,Y,Z]$ is $%
D_{a,b,c}=a\partial _{X}+(aZ+b-\frac{ac}{2})\partial _{Y}+c\partial _{Z}$.
Set $D_{1}=\partial _{Y},D_{2}:=\partial _{X}+Z\partial _{Y},D_{3}=\partial
_{Z}.$ \ As a Lie algebra $\func{Lie}(U)$ is generated by $D_{1,}D_{2},D_{3}$%
. \ One checks that $D_{1}$ commutes with $D_{2},D_{3}$, but $%
[D_{2},D_{3}]=D_{1}$. However, restricted to $k[X,Y,Z]^{D_{1}}=k[X,Z]$, $%
D_{2}$ and $D_{3}$ do commute, as they coincide with the derivations $%
\partial _{X}$ and $\partial _{Z}$. Furthermore, as a $k$ vector space $%
\func{Lie}(U)$ has basis $\partial _{X},\partial _{Y},\partial _{Z}$.
\end{example}

\begin{example}
\label{Ex2} Let $\mathcal{O}(X)=A=k[X,Y,Z]$, and $D_1=Z\partial_X,
D_2=\partial_Y$. These locally nilpotent derivations generate a $U=(\mathcal{%
G}_a)^2$-action on $k^3$ given by $(a,b)\cdot (x,y,z)\longrightarrow (x+az,
y+b, z)$. Now $k[Z]=A^{D_1,D_2}=\mathcal{O}(X/U)$. $D_1,D_2$ are linearly
independent over $k[Z]$. When calculating modulo $Z-\alpha$ where $\alpha\in
k$, we notice that $D_1\mod (Z-\alpha), D_2\mod (Z-\alpha)$ are linearly
independent over $A/(Z-\alpha)$ except when $\alpha=0$. However, defining $%
\mathcal{M}:=(\func{Lie}(U)\otimes k(Z))\cap {DER}(A)= (k(Z)D_1+k(Z)D_2)\cap 
{DER}(A)$ we see that $\mathcal{M}=k[Z]\partial_X+k[Z]\partial_Y$. The
derivations $\partial_X,\partial_Y$ are linearly independent modulo each $%
Z-\alpha$. And for each $\alpha\in k$, we have $A/(Z-\alpha)\cong k^{[2]}$.
\end{example}

\begin{example}
\label{Ex3} Let $P:=X^{2}Y+X+Z^{2}+T^{3}$, $\mathcal{X}:=%
\{(x,y,z,t)~|~P(x,y,z,t)=0\}$. Let $A:=k[x,y,z,t]:=k[X,Y,Z,T]/(P)=\mathcal{O}%
(\mathcal{X})$. The commuting locally nilpotent derivations $2Z\partial
_{Y}-X^{2}\partial _{Z}$, $3T^{2}\partial _{Y}-X^{2}\partial _{T}$ on $%
k[X,Y,Z,T]$ map $P$ to zero, and hence induce derivations $D_{1},D_{2}$ on $%
A $. They are linearly independent over $A^{D_{1},D_{2}}=k[X]$ and since
they commute, induce a $(\mathcal{G}_{a})^{2}$-action on $\mathcal{X}$.
Modulo $X-\alpha $, $D_{1},D_{2}$ are linearly independent, except when $%
\alpha =0$. Now defining $\mathcal{M}:=(\func{Lie}(U)\otimes k(X))\cap {DER}%
(A)=k[X]D_{1}+k[X]D_{2}=\func{Lie}(U)\otimes k[X]$, we see that $\mathcal{M}$
modulo $X-\alpha $ is a $k$-module of dimension $2$ except when $\alpha =0$,
when it is of dimension 1. Also, $A/(X-\alpha )\cong k^{[2]}$ except when $%
\alpha =0$, when it is isomorphic to $R[X]$ where $R=k[Z,T]/(Z^{2}+T^{3})$.
\end{example}

\begin{example}
The $U=\mathcal{G}_{a}\times \mathcal{G}_{a}$ action on $\mathbb{A}^{2}(k)$
given by 
\begin{equation*}
U\times \mathbb{A}^{2}\ni ((s,t),(x,y)\mapsto (x,y+t+sx)\in \mathbb{A}^{2}
\end{equation*}%
is faithful and fixed point free. \ However every point in $\ \mathbb{A}^{2}$
has a nontrivial isotropy subgroup. \ If $x\neq 0,$ then $%
((s,-sx),(x,y)\mapsto (x,y)$ and $((s,0),(0,y))\mapsto (0,y).$
\end{example}

\section{Main Results}

\label{Sec2}

The following simple lemma is useful in a number of places.

\begin{lemma}
Let $U$ be a unipotent algebraic group acting algebraically on a factorial
quasiaffine variety $X$ of dimension $n$ satisfying $\mathcal{O}(X)^{\ast
}=k^{\ast .}$ \ If the action is not transitive and some point $x\in X$ has
orbit of dimension $n-1,$ then $\mathcal{O}(X)^{U}=k[f]$ for some $f\in 
\mathcal{O}(X)$
\end{lemma}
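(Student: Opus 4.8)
The plan is to exhibit one non-constant invariant $f$ whose zero set is the closure of the $(n-1)$-dimensional orbit, and then to show that $\mathcal{O}(X)^{U}$ contains nothing more than $k[f]$. Concretely, I would first examine $Y:=\overline{U\cdot x}$. Since $U$ acts by homeomorphisms and $U\cdot x$ is irreducible, locally closed and $U$-stable, $Y$ is an irreducible $U$-stable closed subset of codimension $1$; as $X$ is factorial the height-one prime $I(Y)$ is principal, $I(Y)=f\mathcal{O}(X)$ with $f$ irreducible, $f$ non-constant (because $\emptyset\neq Y\neq X$) and a non-unit (because $\mathcal{O}(X)^{*}=k^{*}$). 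For $u\in U$ we have $u^{*}(f\mathcal{O}(X))=I(u^{-1}Y)=I(Y)=f\mathcal{O}(X)$, so $u^{*}f=\lambda(u)f$ for a character $\lambda\colon U\to k^{*}$; since $U$ is unipotent $\lambda$ is trivial, hence $f\in\mathcal{O}(X)^{U}=:R$. By the observations in the preliminaries $R$ is factorially closed in $\mathcal{O}(X)$, so $R$ is a UFD with $R^{*}=k^{*}$, $R\cap f\mathcal{O}(X)=fR$, and $R=\operatorname{Frac}(R)\cap\mathcal{O}(X)$. I would also record the elementary fact $\mathcal{O}(X)\cap k(f)=k[f]$: if $p(f)/q(f)$ is in lowest terms and $\alpha$ is a root of $q$, then $V(f-\alpha)\neq\emptyset$ since $f-\alpha\notin k^{*}=\mathcal{O}(X)^{*}$, and $p(f)/q(f)$ has a pole along $V(f-\alpha)$ unless $p(\alpha)=0$, forcing $q$ to be constant.

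Next I would prove $\operatorname{trdeg}_{k}R=1$. It is $\geq1$ because of $f$. For the opposite inequality, observe that $\{z\in X:\dim U\cdot z\geq n-1\}$ is open, since orbit dimension is lower semicontinuous, and contains $x$, so it is dense and the generic orbit has dimension $\geq n-1$; hence if $g_{1},g_{2}\in R$ were algebraically independent over $k$ the fibres of $(g_{1},g_{2})\colon X\to\mathbb{A}^{2}$, being unions of $U$-orbits, would generically have dimension $\leq n-2$, which is impossible. Now $R$ has transcendence degree $\leq2$ and equals $\operatorname{Frac}(R)\cap\mathcal{O}(X)$ with $\mathcal{O}(X)$ finitely generated (automatic when $X$ is affine), so by Zariski's theorem (Hilbert's fourteenth problem in transcendence degree at most two) $R$ is a finitely generated $k$-algebra.

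Consequently $R=\mathcal{O}(C)$ for the affine curve $C:=\operatorname{Spec}R$, which is normal (a UFD is integrally closed), hence smooth. Being a UFD with only constant invertible functions, $C$ must be isomorphic to $\mathbb{A}^{1}$: writing $C=\overline{C}\setminus\{p_{1},\dots,p_{r}\}$ with $\overline{C}$ the smooth projective model, the exact sequence $1\to k^{*}\to\mathcal{O}(C)^{*}\to\bigoplus_{i}\mathbb{Z}p_{i}\to\operatorname{Pic}\overline{C}\to\operatorname{Pic}C\to0$ together with $\operatorname{Cl}(C)=0$ and the divisibility of $\operatorname{Pic}^{0}\overline{C}$ forces $\overline{C}$ to have genus $0$ and $r=1$. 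Thus $R=k[t]$ for a variable $t$; and since $f\in R$ is irreducible in $\mathcal{O}(X)\supseteq R$ it is irreducible in $k[t]$, hence $k$-linear in $t$, so $k[f]=k[t]=R$, which is the assertion.

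The step I expect to be the main obstacle is the finite generation of $R$ -- this is a case of Hilbert's fourteenth problem and is rescued only by the small transcendence degree. A purely geometric proof can avoid this: one shows that the generic fibre of $f\colon X\to\mathbb{A}^{1}$ is irreducible -- equivalently that $k(f)$ is algebraically closed in $k(X)$ -- by noting that this (integral) generic fibre carries a dense orbit of the unipotent group base-changed to $k(f)$, and a variety with a dense orbit of a connected group is geometrically irreducible. Granting this, the general closed fibre $f^{-1}(c)$ is an irreducible $U$-stable divisor containing a dense $(n-1)$-dimensional orbit, i.e.\ an orbit closure, so every $g\in R$ is constant on general $f$-fibres; thus $g\in k(f)$, and then $g\in\mathcal{O}(X)\cap k(f)=k[f]$. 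In both arguments the unipotence of $U$ is used twice: once to make $f$ invariant, and once to control the quotient.
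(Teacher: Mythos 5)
Your argument is correct and reaches the same endgame as the paper, but the route there is genuinely different in its first half. The paper gets transcendence degree one from Rosenlicht's generic geometric quotient, silently assumes that $qf(\mathcal{O}(X)^{U})$ equals the field of rational invariants (i.e.\ that a nonconstant \emph{regular} invariant exists at all), invokes Zariski's finiteness theorem for the intersection $\mathcal{O}(X)\cap K$, and then compresses ``a one-dimensional affine UFD over $k$ with only constant units is $k[f]$'' into its final sentence. You instead manufacture the invariant explicitly: the orbit closure $\overline{Ux}$ is a $U$-stable prime divisor, factoriality makes its ideal principal, and triviality of characters of a unipotent group makes the generator invariant --- this fills in precisely the step the paper leaves implicit. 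You then replace Rosenlicht by an elementary fiber-dimension argument (lower semicontinuity of orbit dimension against fibers of $(g_1,g_2)$) to cap the transcendence degree, and you spell out the genus-zero/units computation on the smooth projective model, plus the observation that $f$ itself, being irreducible in $\mathcal{O}(X)$ and lying in the factorially closed $R$, is already a generator. What your version buys is self-containedness and constructiveness (you exhibit the generator as the equation of an orbit closure); what the paper's buys is brevity and a template that generalizes beyond codimension one. The one point I would flag in both proofs, yours and theirs, is the reliance on finite generation of $R$: your parenthetical ``automatic when $X$ is affine'' papers over the fact that the lemma is stated for quasiaffine $X$, where $\mathcal{O}(X)$ need not be finitely generated and Zariski's theorem does not directly apply; fortunately in transcendence degree one your own closing argument essentially rescues this, since an integrally closed $k$-subalgebra of a one-variable function field that is not a field is automatically the coordinate ring of an open subset of the smooth projective model, hence finitely generated without any appeal to Hilbert's fourteenth problem. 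Your concluding geometric alternative (irreducibility of the generic fiber of $f$) is only a sketch and would need the dense-orbit claim on the generic fiber justified, but since it is offered as an aside it does not affect the validity of the main argument.
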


\begin{proof}
Since $n-1$ is the maximum orbit dimension there is a Zariski open subset $V$
of $X$ for which the geometric quotient $V/U$ exists as a variety. \ Then
the transcendence degree of the quotient field $K$ of $\mathcal{O}(V/U)\ $is
equal to 1. Since $K=qf(\mathcal{O}(X)^{U})$ and 
\begin{equation*}
\mathcal{O}(X)^{U}=\mathcal{O}(X)\cap K,
\end{equation*}%
is a ufd, $\mathcal{O}(X)^{U}$ is finitely generated over $k.$ From $(%
\mathcal{O}(X)^{U})^{\ast }=k^{\ast }$, we conclude that $\mathcal{O}%
(X)^{U}=k[f]$ for some $f\in \mathcal{O}(X)$
\end{proof}

\subsection{Unipotent actions having zero-dimensional quotient}

\begin{theorem}
\label{FaithUni} Let $U$ be an $n$-dimensional unipotent group acting
faithfully on an affine $n$-dimensional variety $X$ satisfying $\mathcal{O}%
(X)^{\ast }=k^{\ast }$. If either\linebreak\ 
\begin{tabular}{ll}
a) & Some $x\in X$ has trivial isotropy subgroup or \\ 
b) & $n=2$, $X$ is factorial, and $U$ acts without fixed points,%
\end{tabular}%
\linebreak\ then the action is transitive. $\ $In particular $X\cong k^{n}$.
\end{theorem}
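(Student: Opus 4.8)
The plan is to treat the two cases separately, using two standing facts about unipotent groups: (i) orbits of a unipotent group acting on an affine variety are Zariski closed (since $U$ is an iterated extension of copies of $\mathcal{G}_{a}$, and $\mathcal{G}_{a}$-orbits are closed because $\mathcal{G}_{a}$ has no nontrivial characters), and (ii) a unipotent group in characteristic zero is isomorphic, as a variety, to affine space, and is torsion free. I also use the standing hypothesis of this subsection, namely that $X//U$ is zero-dimensional, i.e.\ $\mathcal{O}(X)^{U}=k$; this is genuinely needed in case b), as the last example of the preceding section shows (there the action is faithful and has no global fixed point, yet is not transitive — its quotient is one-dimensional).

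For case a), suppose $x\in X$ has trivial isotropy. Since $U$ has no global fixed point is irrelevant here: injectivity of the orbit map forces $\dim(U\cdot x)=\dim U=n=\dim X$. As $U\cdot x$ is a closed irreducible subvariety of full dimension inside the irreducible variety $X$, we get $U\cdot x=X$, so the action is transitive. All isotropy groups are then conjugate, hence all trivial, so the action is free; thus $X$ is a $U$-torsor over a point, which is trivial because $U$ is unipotent (equivalently, because $U$ is special in the sense of Serre), whence $X\cong U\cong k^{n}$.

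For case b) we have $n=2$. Assume the action is not transitive. Since $U$ has no global fixed point, no orbit is $0$-dimensional; and no orbit is $2$-dimensional, for a $2$-dimensional orbit, being closed, irreducible and of full dimension in the irreducible variety $X$, would equal $X$, contradicting non-transitivity. Hence every orbit has dimension exactly $1=n-1$. The hypotheses of the Lemma are now satisfied ($X$ factorial so $\mathcal{O}(X)$ a UFD, $\mathcal{O}(X)^{\ast}=k^{\ast}$, the action not transitive, and an orbit of dimension $n-1$), so $\mathcal{O}(X)^{U}=k[f]$ for some $f\in\mathcal{O}(X)$; moreover the proof of the Lemma exhibits the fraction field of $\mathcal{O}(X)^{U}$ as having transcendence degree $1$ over $k$, so $f\notin k$. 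This contradicts $\mathcal{O}(X)^{U}=k$. Therefore the action is transitive; its common isotropy group then has dimension $\dim U-\dim X=0$, hence is finite, hence trivial since $U$ is torsion free, so the action is free and, exactly as in case a), $X\cong U\cong k^{2}$.

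The main obstacle is really the bookkeeping of orbit dimensions together with the two uses of "unipotent": closedness of orbits converts a full-dimensional orbit into all of $X$, while the codimension-one alternative is pushed through the Lemma to manufacture a one-dimensional quotient, which is precisely what the zero-dimensional quotient hypothesis forbids. The only other point requiring care is the final clause of each case, that transitivity upgrades to $X\cong k^{n}$: this uses both that a $U$-torsor over a point is trivial and that $U$ is an affine space as a variety.
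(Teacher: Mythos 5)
Your case a) is fine and is essentially the paper's argument in a cleaner form: trivial isotropy gives an $n$-dimensional orbit, closedness of unipotent orbits plus irreducibility of $X$ gives $U\cdot x=X$, and triviality of $U$-torsors gives $X\cong U\cong k^{n}$. (You correctly observe that $\mathcal{O}(X)^{\ast}=k^{\ast}$ is not really needed there; the paper routes through $\mathcal{O}(V/\!/U)$ being a field, but the conclusion is the same.)

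Case b) has a genuine gap: you import ``$\mathcal{O}(X)^{U}=k$'' as a standing hypothesis taken from the subsection title, but it is not a hypothesis of the theorem, and it does not follow from the stated hypotheses. Example 4 of the paper is exactly a faithful, fixed-point-free action of a $2$-dimensional unipotent group on the factorial surface $\mathbb{A}^{2}$ with $\mathcal{O}(X)^{\ast}=k^{\ast}$, and there $\mathcal{O}(X)^{U}=k[x]$ is one-dimensional; the paper explicitly says this example \emph{illustrates} case b). So your contradiction in case b) never gets off the ground for the statement as written. What your observation really shows is that the clause ``the action is transitive'' cannot be literally correct in case b) (Example 4 is a counterexample to transitivity), and the substantive conclusion to be proved is $X\cong k^{2}$. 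That is what the paper actually proves, by a different route which your proposal omits entirely: when every orbit is one-dimensional, Lemma 1 gives $\mathcal{O}(X)^{U}=k[f]$; factorial closure makes each $f-\lambda$ irreducible, absence of nonconstant units makes $f:X\rightarrow\mathbb{A}^{1}$ surjective with fibers equal to $U$-orbits $\cong\mathbb{A}^{1}$, smoothness of $X$ (from normality plus the absence of zero-dimensional orbits) makes $f$ flat, hence an $\mathbb{A}^{1}$-bundle over $\mathbb{A}^{1}$, and any such bundle is trivial, so $X\cong\mathbb{A}^{2}$. To repair your write-up you should drop the assumption $\mathcal{O}(X)^{U}=k$, treat the non-transitive subcase of b) head-on, and supply this bundle argument (or an equivalent) to reach $X\cong k^{2}$.
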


\begin{proof}
In case a) there is an open affine subset $V$ of $X$ on which $U$ acts
without fixed points. \ Since $U$ has the same dimension as $V$, $V//U$ is
zero-dimensional, hence $\mathcal{O}(V//U)$ is a field. This field contains $%
k$, and its units are contained in $\mathcal{O}(X)^{\ast }=k^{\ast }$, hence 
$\mathcal{O}(V//U)=k$. It follows that there exists an open set $V^{\prime }$%
of $X$ for which $V^{\prime }/U\cong $\textbf{Spec}$\ k$. Thus $V^{\prime
}\cong U$ as a variety, and therefore $V^{\prime }\cong k^{n}.$ If $v\in
V^{\prime }$, then $Uv=V^{\prime }$. Since $U$ is unipotent, all orbits are
closed, hence $V^{\prime }$ is closed in $X$. Since it is of dimension $n$,
and $X$ is irreducible of dimension $n$, we have that $V^{\prime }=X$.

In case b) $X$ is necessarily smooth since it is smooth in codimension 1 and
every orbit is infinite. \ If $X$ has a two dimensional (i.e. dense) orbit
then the conclusion follows as in a). \ So we assume\ for each $x\in X$ that
the orbit $Ux$ is one dimensional, given as $\exp (uD)x,$ and therefore
isomorphic to $\mathbb{A}^{1}(k)$ by the discussion in the introduction.
From Lemma 1 we conclude that $\mathcal{O}(X)^{U}=k[f]$ for some $f\in 
\mathcal{O}(X)$

Note that factorial closure of $\mathcal{O}(X)^{U}$ implies that $f-\lambda $
is irreducible for every $\lambda \in k.$ The absence of nonconstant units
implies that $X\rightarrow $\textbf{Spec}$(k[f])$ is surjective, and all
fibers are $U$ orbits. Smoothness of $X$ implies in addition that this
mapping is flat, hence an $\mathbb{A}^{1}$ bundle over $\mathbb{A}^{1}.$ \
But any such bundle is trivial, so we conclude again that $X\cong \mathbb{A}%
^{2}.$ \ 
\end{proof}

Example 4 of the previous section illustrates case \emph{b)}.

\subsection{Unipotent actions having one-dimensional quotient}

The following theorem is the main result of this paper.

\begin{theorem}[\textbf{Main theorem}]
\label{General} Let $U$ be a unipotent algebraic group of dimension $n$,
acting on $X$, a factorial variety of dimension $n+1$ satisfying $\mathcal{O}%
(X)^{\ast }=k^{\ast }$.\newline
(1) If at least one $x\in X$ has trivial stabilizer then $\mathcal{O}(X)^{U}=%
\mathcal{O}(X/\!\!/U)=k[f]$. Furthermore, $f^{-1}(\lambda )\cong k^{n}$ for
all but finitely many $\lambda \in k$. \newline
(2) If $U$ acts freely, then $X$ is $U$-isomorphic to $U\times k$. In
particular, $X\simeq k^{n+1}$ and $f$ is a coordinate.
\end{theorem}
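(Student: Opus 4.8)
\medskip

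The plan is to reduce to the one-dimensional-quotient analogue of Theorem \ref{FaithUni} and then upgrade "almost all fibers are $k^n$" to a genuine $U$-bundle statement. For part (1), I first observe that a point with trivial stabilizer has an orbit of dimension $n = \dim X - 1$, so the action is not transitive and Lemma 1 applies directly: $\mathcal{O}(X)^U = k[f]$ for some $f$. The content of (1) beyond Lemma 1 is the claim that $\mathcal{O}(X)^U = \mathcal{O}(X/\!\!/U)$ (which is automatic, $X/\!\!/U := \mathbf{Spec}\,\mathcal{O}(X)^U$) and the fiber description. For the fibers: the locus $X' \subseteq X$ where $U$ acts with trivial stabilizer is $U$-stable and open (the stabilizer dimension is upper semicontinuous, and triviality of the finite part is also open for a unipotent — hence connected — group, so stabilizers jump only on a proper closed set), and nonempty by hypothesis. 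On $X'$ the geometric quotient $X'/U$ exists, is a smooth curve dominating $\mathbf{Spec}\,k[f]$, and each fiber of $X' \to X'/U$ is a $U$-orbit isomorphic to $k^n$. Since $X \setminus X'$ maps into a proper closed, hence finite, subset of $\mathbf{Spec}\,k[f] = \mathbb{A}^1$, for all but finitely many $\lambda$ the whole fiber $f^{-1}(\lambda)$ lies in $X'$ and is a single free $U$-orbit, hence $\cong U \cong k^n$ as a variety.

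\medskip

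For part (2), freeness makes $X' = X$, so every fiber of $f \colon X \to \mathbb{A}^1$ is a free $U$-orbit isomorphic to $k^n$, and in particular $f - \lambda$ is irreducible for every $\lambda$ (factorial closure of $\mathcal{O}(X)^U$, exactly as in the proof of Theorem \ref{FaithUni}b) and $f$ is surjective (no nonconstant units). The key structural input is that $X \to \mathbb{A}^1$ is a principal $U$-bundle: the action is free, the quotient map $X \to X/U = \mathbb{A}^1$ is a geometric quotient, and one checks it is faithfully flat — flatness because over the smooth curve $\mathbb{A}^1$ a dominant morphism from the smooth (hence Cohen–Macaulay) variety $X$ with equidimensional fibers is flat, and smoothness of $X$ follows since $X$ is smooth in codimension $1$ and all orbits are smooth of full relative dimension. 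Then I invoke that $U$ is a special group in Serre's sense: a free $U$-action with geometric quotient that is locally trivial étale is locally trivial Zariski; étale-local triviality of a flat principal $U$-bundle with $U$ connected and unipotent holds (one can build sections étale-locally, or descend along the filtration of $U$ by $\mathcal{G}_a$'s, each step being an $\mathcal{G}_a$-torsor which is automatically Zariski-locally trivial over an affine base). Zariski-local triviality over $\mathbb{A}^1$ together with $\mathrm{Pic}(\mathbb{A}^1) = 0$ and the vanishing of higher cohomology of the unipotent structure sheaf forces the bundle to be globally trivial, giving the $U$-equivariant isomorphism $X \cong U \times \mathbb{A}^1$. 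Finally $U \cong k^n$ as a variety (as recalled in the introduction, $U$ is filtered by $\mathcal{G}_a$'s), so $X \cong k^{n+1}$, and under the trivialization $f$ becomes the projection to the $\mathbb{A}^1$ factor, hence a coordinate.

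\medskip

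I expect the main obstacle to be the bundle-triviality step — specifically, justifying that the free $U$-action with quotient $\mathbb{A}^1$ really is a locally trivial principal bundle rather than merely a free action with nice quotient. The delicate points are: (i) verifying flatness of $X \to \mathbb{A}^1$ rigorously (one wants equidimensionality of fibers plus Cohen–Macaulayness of the total space, i.e. "miracle flatness"), and (ii) promoting freeness to étale-local triviality so that Serre's specialness can be applied. The cleanest route to (ii) is probably induction on $\dim U$: choose a normal $\mathcal{G}_a \trianglelefteq U$ with $U/\mathcal{G}_a$ unipotent of dimension $n-1$; the $\mathcal{G}_a$-action on $X$ is free with geometric quotient $Y := X/\mathcal{G}_a$, which is again a smooth factorial affine variety with $\mathcal{O}(Y)^\ast = k^\ast$ (factorial closure) of dimension $n$, carrying a free $(U/\mathcal{G}_a)$-action with quotient $\mathbb{A}^1$; then $X \to Y$ is a trivial $\mathcal{G}_a$-bundle (a free $\mathcal{G}_a$-action with affine geometric quotient admits a global slice, by the standard slice theorem for locally nilpotent derivations with a local slice made global using that $\mathcal{O}(Y)$ is a UFD), and by induction $Y \cong (U/\mathcal{G}_a) \times \mathbb{A}^1$, so $X \cong \mathcal{G}_a \times (U/\mathcal{G}_a) \times \mathbb{A}^1 \cong U \times \mathbb{A}^1$ $U$-equivariantly. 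The base case $n = 1$ is precisely an application of the $\mathcal{G}_a$ slice theorem over $\mathbb{A}^1$. This inductive version sidesteps the general special-group machinery and keeps everything constructive.
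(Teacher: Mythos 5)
Your treatment of part (1) is essentially sound and close in spirit to the paper, but one step is glossed: you assert that $X\setminus X'$ maps into a proper closed, hence finite, subset of $\mathbb{A}^{1}$. The image of a closed set under $f$ need not be closed, and a divisor in $X$ can perfectly well dominate $\mathbb{A}^{1}$, so "proper closed subset of $X$" does not by itself give a finite image. What rescues the claim is that each fiber $f^{-1}(\lambda )$ is irreducible of dimension $n$ (factorial closure of $k[f]$ makes $f-\lambda $ irreducible), while each orbit in $X'$ is closed of dimension $n$; hence any fiber meeting $X'$ equals a single orbit and lies entirely in $X'$, so $X\setminus X'$ is a union of whole fibers, necessarily finitely many. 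With that observation inserted, part (1) goes through.

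The genuine gap is in part (2), exactly at the point you yourself flag as the obstacle. Your preferred resolution --- induction on $\dim U$ through the intermediate quotient $Y:=X/\mathcal{G}_{a}$ --- assumes what needs to be proved. For a free $\mathcal{G}_{a}$-action on an affine variety of dimension $n+1$ there is no guarantee that a geometric quotient exists as a separated scheme, let alone as a smooth factorial affine variety of dimension $n$ with $\mathcal{O}(Y)^{\ast }=k^{\ast }$; the natural candidate $\mathbf{Spec}\,\mathcal{O}(X)^{\mathcal{G}_{a}}$ need not even have finitely generated coordinate ring (kernels of locally nilpotent derivations can fail to be finitely generated in dimension $\geq 5$), and even when it does, $X\rightarrow X/\!\!/\mathcal{G}_{a}$ need not be surjective or a geometric quotient. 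Likewise, "a free $\mathcal{G}_{a}$-action with affine geometric quotient admits a global slice" is not a standard theorem you can cite; freeness does not imply local triviality, and producing slices is the hard technical core of the whole problem. The paper avoids intermediate quotients entirely: it works with the full group over the one-dimensional quotient $\mathbb{A}^{1}=X/\!\!/U$, proves that the graph map $g:U\times X\rightarrow X\times _{f}X$, $(u,x)\mapsto (x,ux)$, is an isomorphism (bijectivity because fibers are exactly orbits, smoothness of both sides, Zariski's Main Theorem, with birationality supplied by Rosenlicht's cross-section theorem), concludes via Mumford's definition that $f$ is an \'{e}tale principal $U$-bundle, and only then invokes specialness of $U$ and the vanishing of $H^{1}$ to trivialize the bundle over $\mathbb{A}^{1}$. (The paper's algebraic version, Theorem \ref{CDH2}, does descend through a triangular basis of the Lie algebra, but it does so by constructing preslices over $k[f]$ and running a degree argument --- it never forms the intermediate geometric quotients your induction requires.) Your non-inductive sketch is essentially the paper's argument, but it leaves the isomorphism $U\times X\cong X\times _{f}X$ (equivalently, \'{e}tale-local triviality) unproved, and that is the step the entire proof turns on.
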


An important example to keep in mind is example \ref{Ex1}, as this satisfies
(1) but not (2). (There $U=\mathcal{G}_{a}^{2}$.)

\begin{proof}[Proof of theorem \protect\ref{General}]
\ \newline
\textsc{claim 1:} $\mathbf{\ }\mathcal{O}(X)^{U}=k[f]$. \newline
\emph{Proof of claim 1:} This follows from lemma 1.

\textsc{claim 2:} $f:X\rightarrow k$ is surjective and has fibers isomorphic
to $U$. The fibers are the $U$-orbits.\newline
\emph{Proof of claim 2:} The fibers $f^{-1}(\lambda )$ are the zero loci of
the irreducible $f-\lambda $, and are invariant under $U$. Since $U$ acts
freely on each fiber and orbits of unipotent group actions are closed, we
see that the $f$ fibers are exactly the $U$ orbits in $X$. Thus $f$ is a $U$%
-fibration (and, as the underlying variety of $U$ is $k^{n}$, an $\mathbb{A}%
^{n}$-fibration).

\textsc{claim 3:} $X$ is smooth.\newline
\emph{Proof of claim 3:} The set $X_{sing}$ is $U$-stable, hence it is a
union of $U$-orbits. The $U$-orbits are the zero sets $f-\lambda $, hence of
codimension 1. So $X_{sing}$ is of codimension 1 or empty. But $X$ is
factorial, so in particular normal, which implies that the set of singular
points of $X$, denoted by $X_{sing}$, is of codimension at least 2. This
means that $X_{sing}$ can only be empty.

\textsc{claim 4:} $f$ is smooth.\newline
\emph{Proof of claim 4:} All fibers of $f$ are isomorphic to $U$, hence to $%
k^{n}$, by claim 2. Thus the fibers of $f$ are geometrically regular of
dimension $n$. $\mathbf{\ \ }$Since $X$ is smooth, $f$ is flat, and
proposition 10.2 of \cite{Hartshorne} yields that $f$ is smooth.

\textsc{claim 5:} $X\times_f X$ is smooth.\newline
\emph{Proof of claim 5:} $X\times_f X$ is smooth since it is a base
extension of the smooth $X$ by the smooth morphism $f$.

\textsc{claim 6:} $g:U\times X\rightarrow X\times _{f}X$ given by $%
(u,x)\mapsto (x,ux)$ is an isomorphism.\newline
\emph{Proof of claim 6:} The map $g$ restricted to $U\times f^{-1}(\lambda )$
is a bijection onto $\{(x,y)~|~f(x)=f(y)=\lambda \}$. Taking the union over $%
\lambda \in k$, we get that $g$ is a bijection. Since both $U\times X$ and $%
X\times _{f}X$ are smooth and $g$ is a bijection, Zariski's Main Theorem
implies that $g$ is an open immersion if it is birational. \ If so then $g$
must be an isomorphism since it is bijective.

From Rosenlicht's cross section theorem \cite{Rosen}, $X$ has a $U$ stable
open subset $\tilde{X}$ on which the $U$ action has a geometric quotient $%
\tilde{X}/U$ and a $U$ equivariant isomorphism $\tilde{X}\cong U\times 
\tilde{X}/U$. \ Restricting $g$ to $U\times \tilde{X}\rightarrow \tilde{X}%
\times _{f}\tilde{X}$ \ is clearly an isomorphism, so that $g$ is birational.

Now we are ready to prove the theorem. Using def. 0.10 p.16 of \cite{GIT},
and the fact (4) that $f$ is smooth, together with (6), yields that $%
f:X\rightarrow \mathbb{A}^{1}$ is an \'{e}tale principal $U$-bundle and
therefore a Zariski locally trivial principal $U$ bundle as $U$ is special.
Such bundles are classified by the cohomology group $H^{1}(U,k)$, which is
trivial because $U$ is unipotent. $\ $Thus the bundle $f:X\rightarrow k$ is
trivial, which means that $X\cong U\times k.$
\end{proof}

\begin{remark}
\begin{enumerate}
\item To obtain $\tilde{X}$ explicitly and avoid the use of Rosenlicht's
theorem, recall that the action of $U$ is generated by a finite set of $%
\mathbb{\ }\mathcal{G}_{a}$ actions each one given as the exponential of
some locally nilpotent derivation $D_{i}$ of $\mathcal{O}(X),$ indeed $%
D_{i}\in \mathfrak{u}$, the Lie algebra of $U$. As such there is an open
subset $X_{i}$ of $X$ on which $D_{i}$ has a slice, and the corresponding $%
\mathcal{G}_{a}$ acts by translation. Then $\tilde{X}:=\cap _{i=1}^{s}X_{i}$.

\item One can avoid the use of the \'{e}tale topology by applying a
"Seshadri cover" \cite{Sesh}. \ One constructs a variety $Z$ finite over $X,$
necessarily affine, to which the $U$ action extends so that

\begin{enumerate}
\item $k(Z)/k(X)$ is Galois. Denote the Galois group by $\ \Gamma .$

\item The $\Gamma $ and $U$ actions commute on $Z.$

\item The $U$ action on $Z$ is Zariski locally trivial and, because the
action on $X$ is proper by claim 6,

\item $Y\equiv Z/U$ exists as a separated scheme of dimension 1, hence a
curve, and affine because of the existence of nonconstant globally defined
regular functions, namely $\mathcal{O}(Z)^{U}.$

\item $\mathcal{O}(X)^{U}\cong \mathcal{O}(Y)^{\Gamma }$and $X//U\cong
X/U\cong Y/\Gamma $ shows that $X\rightarrow X/U$ is Zariski locally trivial.
\end{enumerate}
\end{enumerate}
\end{remark}

\section{Algebraic Version}

\label{Sec3}

\subsection{Unipotent actions having zero dimensional kernel}

Let $X$ be a quasiaffine variety, and $U$ an algebraic group acting on $X$.
We write $A:=\mathcal{O}(X)$ and denote by $\mathfrak{u}$ the Lie algebra of 
$U.$ In this section, we will make the following assumptions:

\begin{tabular}{ll}
(P)~~ & \emph{a) }$X$ and $U$ are of dimension $n$. \\ 
& \emph{b) }There is a point $x\in X$ such that $\func{stab}(x)=\{e\}$. \\ 
& \emph{c)} $\mathcal{O}(X)^{\ast }=k^{\ast }$%
\end{tabular}

\begin{definition}
Assume (P). We say that $D_{1},\ldots ,D_{n}$ is a triangular basis of $%
\mathfrak{u}$ (with respect to the action on $X$) if
\end{definition}

\begin{enumerate}
\item $\mathbf{\ }\mathcal{\ }\mathfrak{u}=kD_{1}\oplus kD_{2}\oplus \ldots
\oplus kD_{n}$ and

\item With subalgebras $A_{i}$ of $A$ given by $A_{1}:=A$, $%
A_{i}:=A^{D_{1}}\cap \ldots \cap A^{D_{i-1}}$, the restriction of $D_{i}$ to 
$A_{i}$ commutes with the restrictions of $D_{i+1},\ldots ,D_{n}$.
\end{enumerate}

For a triangular basis, it is clear that $D_{j}(A_{j})\subseteq A_{j}$ for
each $j$.

If $U$ is unipotent then the existence of a triangular basis is a
consequence of the Lie-Kolchin theorem. Indeed, the Lie algebra $\mathfrak{%
u\ }$of $U$ is isomorphic to a Lie subalgebra of the full Lie algebra of
upper triangular matrices over $k.$ In particular $\mathfrak{u}$ has a basis 
$D_{1},\ldots ,D_{n}$ satisfying $[D_{i},D_{j}]\in span\{D_{1},\ldots
D_{\min \{i,j\}-1}\}..$ By definition of the $A_{i}$ this basis is
triangular with respect to the action and $D_{1}$ is in the center of $%
\mathfrak{u.}$

\begin{proposition}
\label{tribas} Assume (P) and $U$ unipotent. Then $A\cong k[s_{1},\ldots
,s_{n}]=k^{[n]}$ where $D_{i}(s_{i})=1$, and $D_{i}(s_{j})=0$ if $j>i$.
\end{proposition}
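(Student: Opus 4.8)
The plan is to induct on $n$, using the triangular basis $D_1,\ldots,D_n$ of $\mathfrak{u}$ to peel off one coordinate at a time. The base case $n=1$ is essentially Theorem~\ref{FaithUni}(a): a single locally nilpotent derivation $D_1$ with $\operatorname{stab}(x)=\{e\}$ for some $x$ acts on $A=k^{[1]}$-many... more precisely, $(P)$ forces $A^{D_1}=k$ and the existence of a point with trivial stabilizer gives a slice, so $D_1$ has a slice $s_1$ globally and $A=k[s_1]$. (Alternatively invoke Theorem~\ref{FaithUni} directly: $X\cong k^1$ and the coordinate is the slice.)

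For the inductive step, first I would observe that since $D_1$ is in the center of $\mathfrak{u}$ (by the remark preceding the proposition), the subalgebra $A_2=A^{D_1}$ is stable under all of $D_2,\ldots,D_n$, and these descend to locally nilpotent derivations on $A_2$ generating a faithful action of the $(n-1)$-dimensional unipotent quotient $U/\overline{\langle D_1\rangle}$ (or rather the subgroup with Lie algebra $kD_2\oplus\cdots\oplus kD_n$) on $Y:=\operatorname{Spec}A_2$. The key structural input is that $D_1$ alone, viewed as a $\mathcal{G}_a$-action on $X$, is \emph{fixed-point free} on a dense open set: this is where assumption (b) is used at the first stage, together with the fact that $A^{D_1}=A^U\cap(\text{something})$... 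Here I need the one-dimensional analogue of the quotient argument: because $U$ has a point with trivial stabilizer and $\dim U=\dim X$, the generic orbit is dense, so $D_1$ is nonzero and $A^{D_1}$ has transcendence degree $n-1$; factorial closure plus $\mathcal{O}(X)^*=k^*$ makes $A^{D_1}$ a factorially closed finitely generated UFD with only trivial units. The crucial claim is then that $D_1$ has a \emph{global} slice $s_1\in A$, so that $A=A^{D_1}[s_1]=A_2[s_1]$ is a polynomial ring over $A_2$ in the variable $s_1$, with $D_1=\partial/\partial s_1$. This should follow from Theorem~\ref{General}(2) applied to the single $\mathcal{G}_a$-action generated by $D_1$ — but that requires knowing $D_1$ acts \emph{freely}, not just generically freely, which is exactly the subtlety.

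The main obstacle I anticipate is precisely this: promoting "some point has trivial stabilizer" to "$D_1$ acts freely everywhere" (or arranging the triangular basis so that the leading derivation does act freely). One route is: the locus where $D_1$ vanishes is $U$-stable (since $D_1$ is central, $\exp(tD_i)$ preserves $\ker D_1$... actually preserves the zero locus of $D_1$), hence a union of $U$-orbits; if it were nonempty it would have an orbit of dimension $\le n-1 < \dim$(the vanishing locus need not be a single orbit, so one argues by factorial closure / codimension as in Claim~3 of Theorem~\ref{General}). Once $D_1$ is known slice-admissible globally, the Slice Theorem gives $A\cong A_2^{[1]}$; then I verify $(P)$ holds for the induced action on $Y=\operatorname{Spec}A_2$ — dimension drops to $n-1$ on both sides, $\mathcal{O}(Y)^*=A_2^*\subseteq A^*=k^*$ since $A_2$ is factorially closed in $A$, and a point of $X$ with trivial $U$-stabilizer maps to a point of $Y$ with trivial stabilizer under the $(n-1)$-dimensional group — and apply the inductive hypothesis to get $A_2=k[s_2,\ldots,s_n]$ with $D_i(s_i)=1$, $D_i(s_j)=0$ for $j>i$ ($i\ge 2$). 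Since $s_2,\ldots,s_n\in A_2=\ker D_1$, automatically $D_1(s_j)=0$ for all $j\ge 2$, and $D_i(s_1)$ for $i\ge 2$: here I would use that $D_i$ restricted to $A_i$ commutes with nothing relevant to $s_1$ — actually $s_1\notin A_i$ for $i\ge 2$, so I must compute $D_i(s_1)$ directly; replacing $s_1$ by $s_1 - (\text{suitable element of }A_2)$ if necessary (a triangulation/change-of-slice step) arranges $D_i(s_1)=0$ for $i\ge 2$ as well, completing the proof that $A=k[s_1,\ldots,s_n]$ with the stated derivative pattern.
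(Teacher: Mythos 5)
Your overall strategy (induct on $n$, peel off the central $D_1$, obtain $A=A^{D_1}[s_1]$, and recurse on $A^{D_1}$) matches the paper's in outline, but the step you yourself flag as ``exactly the subtlety'' --- producing a \emph{global} slice for $D_1$ --- is a genuine gap, and the route you sketch does not close it. First, hypothesis (P) contains no factoriality assumption, so the ``factorial closure / codimension as in Claim 3'' argument is not available in this setting. Second, and more seriously, even if you could show that $D_1$ generates a fixed-point-free $\mathcal{G}_a$-action, freeness does not imply the existence of a slice: there are free $\mathcal{G}_a$-actions on affine space (Winkelmann's examples) that are not translations, so ``once $D_1$ is known slice-admissible globally'' is precisely the conclusion you need, not something the Slice Theorem hands you after checking freeness. (A geometric repair does exist --- use Theorem \ref{FaithUni}(a) to get transitivity, identify $X$ with $U$, and observe that $U\rightarrow U/\exp(kD_1)$ is a trivial $\mathcal{G}_a$-torsor over an affine base --- but that is not the argument you give.)

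The paper closes this gap by reversing the order of your two steps and replacing the geometric input with an algebraic one. It first applies the induction hypothesis to $A^{D_1}$ (legitimate since $D_1$ is central, so $\mathfrak{u}/kD_1$ acts there with a triangular basis), obtaining $s_2,\ldots ,s_n$ with $D_i(s_i)=1$ and $D_i(s_j)=0$ for $j>i\geq 2$, so that $A^{D_1}=k[s_2,\ldots ,s_n]$. Only then does it produce the slice for $D_1$: among all preslices $p$ (i.e. $D_1(p)=q\neq 0$, $D_1(q)=0$, hence $q\in k[s_2,\ldots ,s_n]$) choose one with $q$ of minimal lexicographic degree for $s_2\gg s_3\gg \cdots \gg s_n$; since $[D_1,D_i]=0$, $D_i(p)$ is again a preslice with derivative $\partial_{s_i}(q)$, and minimality forces $\partial_{s_i}(q)=0$ for each $i\geq 2$, whence $q\in k^{\ast}$ and $p$ is a slice up to a scalar. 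Note that this argument needs $s_2,\ldots ,s_n$ already in hand, so your order (slice first, then induct) forecloses it. A last, minor point: the final step of your proposal, arranging $D_i(s_1)=0$ for $i\geq 2$, is not required, since the proposition only asserts $D_i(s_j)=0$ for $j>i$.
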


\begin{proof}
We proceed by induction $n=\dim $ $\mathfrak{u}$. If $n=1$, then we have one
nonzero LND on a dimension one $k$-algebra domain $A$ satisfying $A^{\ast
}=k^{\ast }$. It is well-known that this means that $A\cong k[x]$ and the
derivation is simply $\partial _{x}$. Suppose the theorem is proved for $n-1$%
. Let $D_{1},D_{2,}\ldots D_{n}$ be a triangular basis for $\mathfrak{u}$.
Restricting to $A^{D_{1}}$ and noting that $D_{1}$ is in the center of $%
\mathfrak{u},$ we have an action of the Lie algebra $\mathfrak{u}/kD_{1}$
which has the triangular basis $k\overline{D_{2}}+\ldots +k\overline{D_{n}}$
\ ($\overline{D_{i}}$ denotes residue class modulo $kD_{1}).$ By
construction $\overline{D}_{i}(a):=D_{i}(a)$ is well defined, and by
induction we find $s_{2},\ldots ,s_{n}\in $ $A^{D_{1}}$ satisfying $%
D_{i}(s_{i})=1,D_{i}(s_{j})=0$ if $j>i\geq 2$. $D_{i}(s_{j})=\delta _{ij}$%
.\smallskip \smallskip\ \newline
Next we consider a preslice $p\in A$ such that $D_{1}(p)=q,D_{1}(q)=0$, i.e. 
$q=q(s_{2},\ldots ,s_{n})$. We pick $p$ in such a way that $q$ is of lowest
possible lexicographic degree w.r.t $s_{2}>>s_{3}>>\ldots >>s_{n}.$ Now $%
D_{1}(D_{2}(p))=D_{2}D_{1}(p)=D_{2}(q)$. Restricted to $k[s_{2},\ldots
,s_{n}]$, $D_{2}=\partial _{s_{2}}$, so $D_{2}(q)$ is of lower $s_{2}$%
-degree than $q$. Unless $D_{2}(q)=0$, we get a contradiction with the
degree requirements of $q$, as $D_{2}(p)$ would be a \textquotedblleft
better\textquotedblright\ preslice having a lower degree derivative. Thus, $%
q\in k[s_{3},\ldots ,s_{n}]$. Using the same argument for $D_{3},D_{4}$ etc.
we get that $q\in k^{\ast }$. Hence, $p$ is in fact a slice.
\end{proof}

\subsection{Unipotent actions having one-dimensional quotient}

With the same notations as in the previous section, we also denote the ring
of $U$ invariants in $A$ by $A^{U}$ and \textbf{Spec }$A^{U}$ \ by $X//U$.
Note that $A^{U}=\{a\in A~|~D(a)=0$ for all $D\in \mathfrak{u}\}$. If $U$ is
unipotent and $D_{1},\ldots ,D_{n}$ is a triangular basis of $\mathfrak{u}$,
we again write $A_{1}:=A$, $A_{i+1}=A_{i}\cap A^{D_{i}}$, noting that $%
A^{U}=A_{n}.$ In this section we consider the conditions :

\begin{tabular}{ll}
(Q1)~~ & $U$ is a unipotent algebraic group of dimension $n$ acting on an \\ 
& affine variety $X$ of dimension $n+1$ with $A^{\ast }=k^{\ast }$.%
\end{tabular}

and:

\begin{tabular}{ll}
(Q) & $A^{U}=k[f]$ for some irreducible $f\in A\backslash k$.%
\end{tabular}

\begin{remark}
According to Lemma 1, condition (Q1) along with the assumption that $X$ is
factorial and the existence of a point $x\in X$ with $\func{stab}(x)=\{e\}$,
implies that (Q) holds.
\end{remark}

\begin{notation}
Assuming (Q), let $\alpha \in k$. Set $\overline{A}:=A/(f-\alpha )$ and
write $\overline{a}$ for the residue class of $a$ in $\overline{A}$ and $%
\overline{D}$ for the derivation induced by $D\in \mathfrak{u}$ on $%
\overline{A}.$
\end{notation}

Our goal is to prove the following constructively:

\begin{theorem}
\label{CDH2a}\label{CDH2} Assume (Q1) and (Q). Let $D_{1},\ldots ,D_{n}$ be
a triangular basis of $\mathfrak{u}$.

\begin{enumerate}
\item For $\alpha \in k,$

\begin{enumerate}
\item If $\ \overline{D_{1}},\ldots ,\overline{D_{n}}$ are independent over $%
A/(f-\alpha )$, then 
\begin{equation*}
A/(f-\alpha )\cong k^{[n]}.
\end{equation*}

\item There are only finitely many $\alpha $ for which $\overline{D_{1}}%
,\ldots ,\overline{D_{n}}$ are dependent over $A/(f-\alpha )$.
\end{enumerate}

\item In the case that $\overline{D_{1}},\ldots ,\overline{D_{n}}$ are
independent over $A/(f-\alpha )$ for each $\alpha \in k$, then there are $%
s_{1},\ldots ,s_{n}\in A$ with $A=k[s_{1},\ldots ,s_{n},f]$, hence $A$ is
isomorphic to a polynomial ring in $n+1$ variables (and $f$ is a coordinate).
\end{enumerate}
\end{theorem}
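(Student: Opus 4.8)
The plan is to prove part (1a) first, then derive part (1b) and part (2) from it. For part (1a), fix $\alpha$ with $\overline{D_1},\dots,\overline{D_n}$ independent over $\overline{A}=A/(f-\alpha)$. The key observation is that $\overline{A}$ is an $n$-dimensional affine domain on which $\overline{\mathfrak u}=k\overline{D_1}\oplus\cdots\oplus k\overline{D_n}$ acts, and that $\overline{A}^{\ast}=k^{\ast}$ (since $\overline A$ is a quotient of the factorially-closed-into $A$ by an irreducible element — more carefully, $f-\alpha$ is irreducible by (Q) so $(f-\alpha)$ is prime, $\overline A$ is a domain, and units of $\overline A$ lift to units of $A_{f-\alpha}$... this needs the argument that $\mathcal O(X)^{\ast}=k^{\ast}$ and factoriality force $\overline A^{\ast}=k^{\ast}$, which holds because any unit $\overline u$ gives $u v = 1 + (f-\alpha) w$ in $A$, and a localization/degree argument in $f$ pins it down). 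I would then check that $\overline{D_1},\dots,\overline{D_n}$ is a triangular basis of $\overline{\mathfrak u}$ with respect to the $\overline{A}$-action: the chain $\overline{A}_i := \overline{A}^{\,\overline{D_1}}\cap\cdots\cap\overline{A}^{\,\overline{D_{i-1}}}$ receives the image of $A_i$, and the commutation relations of Definition 1 descend modulo $f-\alpha$ because $D_i$ commutes with $f$. Crucially, independence of the $\overline{D_i}$ is exactly what guarantees that the induced action is faithful and that $\overline{\mathfrak u}$ still has dimension $n$, so hypothesis (P) holds for $(\overline{X},\overline{U})$ with $\overline X := \mathbf{Spec}\,\overline A$. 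Proposition \ref{tribas} then gives $\overline A\cong k^{[n]}$, proving (1a).

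For part (1b), I would argue that the locus of bad $\alpha$ is closed and proper, hence finite (being a proper closed subset of $\mathbb{A}^1$). The dependence of $\overline{D_1},\dots,\overline{D_n}$ over $\overline A$ can be detected by the vanishing of all $n\times n$ minors of the matrix whose rows are $D_i(s_1),\dots,D_i(s_n)$ for a fixed finite generating set $s_1,\dots,s_n,f$ of $A$ (here one uses that $A^{U}=k[f]$, so there is no "extra" invariant direction and the $D_i$ are independent over $k(f)$ by a transcendence-degree count — $\operatorname{trdeg}_k A = n+1$, $\operatorname{trdeg}_k A^{U}=1$). Thus over $k(f)$ the $D_i$ are generically independent, so some minor is a nonzero element $g\in k[f]$, and the bad $\alpha$ are among the roots of $g$, a finite set. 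The one subtlety is ruling out that \emph{every} minor vanishes identically, which is precisely the generic-independence statement just mentioned.

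For part (2), the hypothesis is that $\overline{D_1},\dots,\overline{D_n}$ are independent for \emph{every} $\alpha$, so by (1a) every fiber $A/(f-\alpha)\cong k^{[n]}$. I would run an argument parallel to the induction in Proposition \ref{tribas} but carried out "with parameter $f$": restricting to $A^{D_1}$ and quotienting by $kD_1$, apply induction on $n$ to the action of $\mathfrak u/kD_1$ on $A^{D_1}$ (which has dimension $n$, one-dimensional quotient $k[f]$, and all fibers still polynomial rings) to obtain $s_2,\dots,s_n\in A^{D_1}$ with $D_i(s_j)=\delta_{ij}$ for $i,j\ge 2$ and $A^{D_1}=k[s_2,\dots,s_n,f]$. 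Then produce a slice $s_1$ for $D_1$ by the same "lowest-lex-degree preslice" trick: take $p$ with $D_1(p)=q$, $D_1(q)=0$ so $q\in A^{D_1}=k[s_2,\dots,s_n,f]$, of minimal lex degree in $s_2>\cdots>s_n>f$; applying $D_2,\dots,D_n$ successively forces $q\in k[f]$, and then independence of $\overline{D_1}$ modulo each $f-\alpha$ (i.e. $\overline{D_1}\ne 0$ on $\overline A$ for all $\alpha$) forces $q$ to have no root, so $q\in k^{\ast}$, giving a genuine slice $s_1$ with $D_1(s_1)=1$. Then $A=A^{D_1}[s_1]=k[s_1,\dots,s_n,f]$, and since $A$ is a domain of transcendence degree $n+1$ these $n+1$ elements are algebraically independent, so $A\cong k^{[n+1]}$ with $f$ a coordinate.

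The main obstacle I expect is verifying that hypothesis (P) genuinely transfers to the fiber in part (1a) — specifically the two points that $\overline A^{\ast}=k^{\ast}$ and that $\overline{\mathfrak u}$ has full dimension $n$ with a point of trivial stabilizer on $\overline X$. The unit statement needs the factoriality of $A$ together with $A^{\ast}=k^{\ast}$ and irreducibility of $f-\alpha$; the stabilizer/faithfulness statement is where the independence hypothesis on the $\overline{D_i}$ does its essential work, since without it the "quotient action" could collapse and Proposition \ref{tribas} would not apply (this is exactly the phenomenon flagged by Examples \ref{Ex2} and \ref{Ex3} at $\alpha=0$).
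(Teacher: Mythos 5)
Your overall strategy is sound and rests on the same two pillars as the paper's argument: part (1a) is obtained exactly as in the paper, by transferring hypothesis (P) to the fiber and invoking Proposition \ref{tribas} (your attention to why $\overline{A}^{\ast}=k^{\ast}$ and why independence of the $\overline{D_i}$ gives a point of trivial stabilizer is welcome, since the paper passes over this in one line); and both (1b) and (2) ultimately come down to generic independence of the $D_i$ over $k(f)$ and the passage from ``preslices'' to genuine slices. Where you diverge is in the packaging. The paper isolates two lemmas: Lemma \ref{AA1} shows that the set $\mathcal{J}_i$ of values $D_i(p)$ achievable on compatible preslices is a nonzero principal ideal $q_i(f)k[f]$, and Lemma \ref{BLA} shows that the $\overline{D_i}$ are dependent modulo $f-\alpha$ \emph{if and only if} some $q_i(\alpha)=0$. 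Both (1b) (the bad $\alpha$ are among the roots of $q_1\cdots q_n$) and (2) (everywhere-independence forces each $q_i\in k^{\ast}$, hence the $p_i$ are slices and $A=A_{n+1}[s_1,\dots,s_n]=k[s_1,\dots,s_n,f]$) then fall out at once. You replace the first use by a determinantal rank argument and the second by re-running the induction of Proposition \ref{tribas} ``with parameter $f$.'' This is workable, and arguably makes the role of the independence hypothesis more transparent, but it costs you the clean biconditional of Lemma \ref{BLA}, which is exactly the step you end up asserting without proof.

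Three points need shoring up. First, in (1b) the $n\times n$ minors are elements of $A$, not of $k[f]$, so ``the bad $\alpha$ are among the roots of $g$'' does not parse as written; you instead need the observation that a nonzero $m\in A$ lies in $(f-\alpha)$ for only finitely many $\alpha$ (otherwise $V(m)$ would contain infinitely many $n$-dimensional fibers of $f$, contradicting that it has finitely many components). Second, and more seriously, the assertion in part (2) that ``independence of $\overline{D_1}$ modulo each $f-\alpha$ forces $q$ to have no root'' is precisely the nontrivial direction of Lemma \ref{BLA}: one must show that if $q(\alpha)=0$ (with $q$ a generator of the ideal of achievable preslice-values) then $\overline{D_1}$ annihilates a subring of $\overline{A}$ over which $\overline{A}$ is algebraic, hence vanishes identically; this uses that $s_2,\dots,s_n,f,p$ are algebraically independent in the $(n+1)$-dimensional domain $A$ and survive reduction modulo $f-\alpha$. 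Third, your induction in part (2) applies the inductive hypothesis (in particular (Q1)) to $A^{D_1}$, whose finite generation is not known a priori; the paper sidesteps this by proving Lemma \ref{AA1} inside the localization $A\otimes k(f)$ and only afterwards descending to $A$. None of these is fatal, but the second is the heart of the theorem and cannot be left as an aside.
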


\begin{definition}
Assume (Q1) and (Q), and a triangular basis $D_{1},\ldots ,D_{n}$ of $%
\mathfrak{u}$. Define 
\begin{equation*}
\mathcal{P}_{i}:=\{p\in A~|~D_{i}(p)\in k[f],\text{ }D_{j}(p)=0\text{ if }%
j<i\}
\end{equation*}%
and%
\begin{equation*}
\mathcal{J}_{i}:=D_{i}(\mathcal{P}_{i})\subseteq k[f].
\end{equation*}
\end{definition}

Thus $\mathcal{P}_{i}$ is the set of "preslices" of $D_{i}$ that are
compatible with the triangular basis $D_{1},\ldots ,D_{n}.$

\begin{lemma}
\label{AA1} There exist $p_{i}\in \mathcal{P}_{i}\backslash \{0\},p_{i}\in
A_{i},$ and $q_{i}\in k^{[1]}\backslash \{0\}$ such that $\mathcal{J}%
_{i}=q_{i}(f)k[f]$ and $D_{i}(p_{i})=q_{i}$.
\end{lemma}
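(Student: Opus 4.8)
The plan is to reduce the lemma to a single existence statement and then produce the required preslice by a ``push down'' argument inside the chain $A=A_1\supseteq A_2\supseteq\cdots\supseteq A_{n+1}=A^U=k[f]$. First observe that $\mathcal P_i$ is a $k$-subspace and, since $D_j(f)=0$ for every $j$, it is stable under multiplication by $k[f]$: for $p\in\mathcal P_i$ and $h\in k^{[1]}$ one has $h(f)p\in\mathcal P_i$ and $D_i(h(f)p)=h(f)D_i(p)$. Hence $\mathcal J_i=D_i(\mathcal P_i)$ is an ideal of the principal ideal domain $k[f]$, so as soon as we know $\mathcal J_i\neq 0$ it equals $q_i(f)k[f]$ for some $q_i\in k^{[1]}\setminus\{0\}$, and \emph{any} $p_i\in\mathcal P_i$ with $D_i(p_i)=q_i(f)$ does the job — it automatically lies in $A_i$ because the defining condition $D_j(p_i)=0$ for $j<i$ is exactly membership in $A_i$. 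Thus everything reduces to exhibiting one nonzero element of $\mathcal J_i$, i.e.\ a preslice $p$ of $D_i$ with $D_j(p)=0$ for $j<i$ and $D_i(p)\in k[f]\setminus\{0\}$.

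Next I would produce a starting preslice. Since the basis is triangular, $D_i(A_i)\subseteq A_i$, so $D_i|_{A_i}$ is a locally nilpotent derivation of the domain $A_i$, and I claim it is nonzero. Here I use a transcendence-degree count: $\operatorname{tr.deg}_kA=n+1$, while $\operatorname{tr.deg}_kA^U=\operatorname{tr.deg}_kk[f]=1$ (the element $f$ is transcendental over $k$, since otherwise $k[f]$ would be a subfield of $A$, forcing $(k[f])^\ast\subseteq A^\ast=k^\ast$ and hence $k[f]=k$, contradicting (Q)). Each of the $n$ steps $A_j\rightsquigarrow A_{j+1}$ drops transcendence degree by $0$ or $1$ — by $0$ precisely when $D_j|_{A_j}=0$, by $1$ otherwise, using the standard fact that a nonzero locally nilpotent derivation of an affine domain over a characteristic-zero field drops transcendence degree by exactly one — and the total drop is $n$, so every step drops by exactly $1$; in particular $D_i|_{A_i}\neq 0$. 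Picking $a\in A_i$ with $D_i(a)\neq 0$ and letting $p=D_i^{\,\ell-1}(a)$ where $\ell=\max\{t:D_i^{\,t}(a)\neq 0\}$ gives a preslice $p\in A_i$ with $q:=D_i(p)\in A_{i+1}\setminus\{0\}$.

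Now I would push $q$ down into $k[f]$ using the triangular relations $[D_a,D_b]\in\operatorname{span}\{D_1,\dots,D_{\min(a,b)-1}\}$, which yield two structural facts: (i) $D_\ell(A_m)\subseteq A_m$ whenever $\ell\ge m-1$, and (ii) $D_i$ commutes with $D_\ell$ as an operator on $A_i$ for every $\ell\ge i$. I would process $\ell=i+1,i+2,\dots,n$ in turn. At the start of stage $\ell$ the current preslice $p$ lies in $A_i$ with $q=D_i(p)\in A_\ell\setminus\{0\}$; set $m_\ell=\max\{t:D_\ell^{\,t}(q)\neq 0\}$ (finite, by local nilpotence) and replace $p$ by $D_\ell^{\,m_\ell}(p)$. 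By (i) the new $p$ still lies in $A_i$ and the new $q=D_\ell^{\,m_\ell}(q)$ still lies in $A_\ell$; by (ii), $D_i$ of the new $p$ is the new $q$, so the new $p$ is still a preslice; and by construction $D_\ell(q)=0$, so in fact $q\in A_{\ell+1}$. After the stage $\ell=n$ I obtain $p_i\in A_i$ with $D_i(p_i)=q\in A_{n+1}=k[f]$ and $q\neq 0$, whence $\mathcal J_i\neq 0$ and the lemma follows as in the first paragraph.

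The main obstacle I anticipate is the bookkeeping in the push-down step: at each stage one must verify that the chosen power of $D_\ell$ keeps the element in the right subalgebra $A_m$ and that $D_i$ still commutes past it, which is where the precise form of the triangular commutator relations is essential; a secondary point requiring care is the transcendence-degree argument establishing $D_i|_{A_i}\neq 0$, which leans on the standard behaviour of kernels of locally nilpotent derivations.
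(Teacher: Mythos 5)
Your proof is correct, but it reaches the key point --- that $\mathcal{J}_i\neq 0$ --- by a genuinely different route than the paper. The paper does it in one stroke: it applies Proposition~\ref{tribas} to $A(f):=A\otimes_k k(f)$, an $n$-dimensional situation over the field $k(f)$, to obtain slices $s_i\in A(f)$ with $D_i(s_i)=1$ and $D_j(s_i)=0$ for $j<i$, and then clears denominators so that $r(f)s_i\in\mathcal{P}_i$ has image $r(f)$; the remaining steps (the $k[f]$-module structure of $\mathcal{P}_i$, hence principality of $\mathcal{J}_i$, and $p_i\in A_i$ coming for free from the definition of $\mathcal{P}_i$) coincide with your first paragraph. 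Your substitute --- proving $D_i|_{A_i}\neq 0$ by counting transcendence-degree drops along $A=A_1\supseteq\cdots\supseteq A_{n+1}=k[f]$, then pushing $q=D_i(p)$ down the chain by applying top powers of $D_{i+1},\dots,D_n$ and using triangularity to commute them past $D_i$ --- is longer but entirely internal to $A$ and more elementary; in particular it sidesteps the need to check that the hypotheses of Proposition~\ref{tribas} (a point with trivial stabilizer, triviality of units) survive the base change to $k(f)$, a point the paper leaves implicit. What the paper's route buys is brevity and the auxiliary slices $s_i$, which the subsequent Corollary invokes to see that the $p_i$ are algebraically independent; with your construction that corollary needs its own (easy) argument, e.g.\ that each $p_i$ is transcendental over the fraction field of $A_{i+1}$, which contains $p_{i+1},\dots,p_n$. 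Two minor remarks: the commutator relations $[D_a,D_b]\in\operatorname{span}\{D_1,\dots,D_{\min(a,b)-1}\}$ you cite come from the Lie--Kolchin construction rather than from the paper's definition of a triangular basis, but your facts (i) and (ii) follow from the definition alone (for $a\in A_m\subseteq A_j$ with $j<m\leq\ell$, condition (2) of the definition already gives $[D_j,D_\ell](a)=0$), so nothing is lost; and the ``drop by one'' property of kernels of nonzero locally nilpotent derivations requires only a characteristic-zero domain, which matters since the intermediate rings $A_i$ are not known to be affine.
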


\begin{proof}
First note that $\mathcal{J}_{i}$ is not empty, as theorem \ref{tribas}
applied to $A(f):=A\otimes k(f)$ gives an $s_{i}\in A(f)$ which satisfies $%
D_{i}(s_{i})=1$, $D_{j}(s_{i})=0$ if $j<i$. Multiplying $s_{i}$ by a
suitable element of $k[f]$ gives a nonzero element $r(f)s_{i}$ of $\mathcal{P%
}_{i}$, and $D_{i}(r(f)s_{i})=r(f)$. Because $k[f]=\cap \ker (D_{i})$, $%
\mathcal{P}_{i}$ is a $k[f]$-module, and therefore $\mathcal{J}_{i}$ is an
ideal of $k[f]$. This means that $\mathcal{J}_{i}$ is a principal ideal, and
we take for $q_{i}$ a generator (and $p_{i}\in D_{i}^{-1}(q_{i})$). Since $%
D_{j}(p_{i})=0$ if $j<i$, we have $p_{i}\in A_{i}$.
\end{proof}

\begin{corollary}
The $p_{i},$ $1\leq i\leq n,$ are algebraically independent over $k.$
\end{corollary}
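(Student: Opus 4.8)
The plan is to pass to the localization $A(f):=A\otimes _{k[f]}k(f)$, exactly as in the proof of Lemma \ref{AA1}, where the structure becomes transparent. By Proposition \ref{tribas} applied to $A(f)$ over the base field $k(f)$ one obtains a presentation $A(f)=k(f)[s_{1},\ldots ,s_{n}]=k(f)^{[n]}$ with $D_{i}(s_{i})=1$ and $D_{i}(s_{j})=0$ for $j>i$; the kernel computations then force $A_{i}(f):=A(f)^{D_{1}}\cap \cdots \cap A(f)^{D_{i-1}}=k(f)[s_{i},\ldots ,s_{n}]$, with $D_{i}$ acting on $A_{i}(f)$ as $\partial /\partial s_{i}$. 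I would also record at the outset that $f$ is transcendental over $k$: otherwise $k[f]=A^{U}$ would be a field properly containing $k$ inside the domain $A$, contradicting $A^{\ast }=k^{\ast }$. Hence $\operatorname{trdeg}_{k}k(f)=1$, and it suffices to prove that $p_{1},\ldots ,p_{n}$ are algebraically independent over $k(f)$ --- then $f,p_{1},\ldots ,p_{n}$ are algebraically independent over $k$, and a fortiori so are $p_{1},\ldots ,p_{n}$.

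For the main computation, recall from Lemma \ref{AA1} that $p_{i}\in A_{i}$, hence $p_{i}\in A_{i}(f)=k(f)[s_{i},\ldots ,s_{n}]$, and that $D_{i}(p_{i})=q_{i}(f)\in k(f)^{\ast }$ is a constant for $\partial /\partial s_{i}$. Integrating, $p_{i}=q_{i}(f)\,s_{i}+c_{i}$ for some $c_{i}\in k(f)[s_{i+1},\ldots ,s_{n}]$. I would then show by downward induction on $i$ that $k(f)[p_{i},p_{i+1},\ldots ,p_{n}]=k(f)[s_{i},s_{i+1},\ldots ,s_{n}]$: for $i=n$ this is immediate since $c_{n}\in k(f)$ and $q_{n}(f)\neq 0$; for the inductive step, $c_{i}\in k(f)[s_{i+1},\ldots ,s_{n}]=k(f)[p_{i+1},\ldots ,p_{n}]$ by the induction hypothesis, so $s_{i}=q_{i}(f)^{-1}(p_{i}-c_{i})\in k(f)[p_{i},\ldots ,p_{n}]$, while the reverse inclusion is clear. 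Taking $i=1$ yields $k(f)[p_{1},\ldots ,p_{n}]=k(f)[s_{1},\ldots ,s_{n}]=A(f)$, a polynomial ring in $n$ variables over $k(f)$; consequently $p_{1},\ldots ,p_{n}$ are algebraically independent over $k(f)$, which finishes the argument.

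The one step deserving attention is the transfer of Proposition \ref{tribas} to $A(f)$ over the field $k(f)$, in particular the verification $A(f)^{\ast }=k(f)^{\ast }$. This holds because $A^{U}=k[f]$, being a finite intersection of kernels of locally nilpotent derivations, is factorially closed in $A$: a unit of $A(f)=A\otimes _{k[f]}k(f)$ can be written $a/s$ with $a\in A$, $s\in k[f]\setminus \{0\}$, and $a$ dividing a nonzero element of $k[f]$ in $A$, whence $a\in k[f]$ and $a/s\in k(f)$. Since this same passage is already invoked in the proof of Lemma \ref{AA1}, and everything after it is merely a triangular change of coordinates, I do not anticipate a genuine obstacle; the only real content is that the ``diagonal entries'' $q_{i}(f)$ of that change of coordinates are nonzero, which is precisely Lemma \ref{AA1}.
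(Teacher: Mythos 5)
Your proof is correct and follows essentially the same route as the paper's: pass to $A(f)=A\otimes_{k[f]}k(f)$, invoke Proposition \ref{tribas} to get the coordinates $s_{1},\ldots ,s_{n}$, and relate the $p_{i}$ to the $s_{i}$. The paper compresses this to the one-line assertion ``$p_{i}\in k[f]s_{i}$,'' which is not literally forced by the construction in Lemma \ref{AA1} (a preimage $p_{i}\in D_{i}^{-1}(q_{i})$ is only determined up to an element of $\ker D_{i}\cap A_{i}$); your triangular change of variables $p_{i}=q_{i}(f)s_{i}+c_{i}$ with $c_{i}\in k(f)[s_{i+1},\ldots ,s_{n}]$ handles exactly that ambiguity, so your write-up is the more careful version of the same argument.
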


\begin{proof}
The $s_{i}$ are certainly algebraically independent, and $p_{i}\in
k[f]s_{i}. $
\end{proof}

\begin{lemma}
\label{BLA} Assume (Q), and take $p_{i},q_{i}$ as in lemma \ref{AA1}. Then
the $D_{i}$ are linearly dependent modulo $f-\alpha $ if and only if $%
q_{i}(\alpha )=0$ for some $i$.
\end{lemma}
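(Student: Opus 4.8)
The plan is to prove the biconditional by analyzing when the restricted derivations $\overline{D_1},\dots,\overline{D_n}$ on $\overline{A}=A/(f-\alpha)$ fail to be linearly independent. The natural tool is the family of preslices $p_i\in\mathcal{P}_i$ with $D_i(p_i)=q_i(f)$ from Lemma \ref{AA1}. Recall that by Proposition \ref{tribas} applied to $A(f)=A\otimes_{k[f]}k(f)$, after inverting $f$ (equivalently, over the generic point of $\mathbb{A}^1=\mathrm{Spec}\,k[f]$) the $D_i$ become a triangular basis with an honest system of slices $s_i$; the $p_i$ are just the $s_i$ cleared of denominators, so $p_i\in k[f]s_i$ and $D_i(p_i)=q_i(f)$ with $D_j(p_i)=0$ for $j<i$. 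The key observation is that $\overline{D_i}(\overline{p_i})=\overline{q_i(f)}=q_i(\alpha)$, a scalar.

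**The ``if'' direction.** Suppose $q_i(\alpha)=0$ for some $i$; pick the smallest such $i$. Then on $\overline{A}$ we have $\overline{D_j}(\overline{p_i})=0$ for all $j<i$ (since $D_j(p_i)=0$ already in $A$) and $\overline{D_i}(\overline{p_i})=q_i(\alpha)=0$. I would then want to conclude that $\overline{p_i}$ lies in the kernel of $\overline{D_1},\dots,\overline{D_i}$ — i.e. $\overline{p_i}\in\overline{A}_{i+1}$ in the obvious notation — and moreover that $\overline{p_i}$ is non-constant, so that the restrictions of $\overline{D_{i+1}},\dots,\overline{D_n}$ to this smaller ring, together with the now-trivial $\overline{D_i}$, cannot be independent: one gets a nontrivial $\overline{A}$-linear relation because $\overline{D_i}$ kills a ring on which we'd need it to be a ``coordinate derivation.'' More directly: if $\overline{D_1},\dots,\overline{D_n}$ were independent over $\overline{A}$, then by (the local form used in the proof of Theorem \ref{CDH2}, or by a direct slice argument) $\overline{A}\cong k^{[n]}$ with $\overline{D_i}$ having a slice, forcing $\overline{D_i}(\overline{A}_i)=\overline{A}_i$ — but $\overline{D_i}$ vanishes on $\overline{q_i(\alpha)}\cdot(\text{stuff})$ in a way incompatible with surjectivity onto $\overline{A}_i$, since $\overline{p_i}$ would have to be hit. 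I expect the cleanest route is: independence of all $\overline{D_i}$ forces (by Proposition \ref{tribas}-type reasoning over $k$, not $k(f)$) that each $\mathcal{J}_i\otimes\overline{A}$ contains $1$, hence $q_i(\alpha)\in k^*$; contrapositive gives the claim.

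**The ``only if'' direction.** Conversely, suppose $q_i(\alpha)\neq 0$ for every $i$. I want to build an actual slice system for $\overline{D_1},\dots,\overline{D_n}$ on $\overline{A}$, which immediately gives independence (a triangular basis with slices is visibly independent over the ring, since $\overline{D_i}(\overline{s_i})=1$). Set $\overline{s_i}:=q_i(\alpha)^{-1}\overline{p_i}$. Then $\overline{D_i}(\overline{s_i})=q_i(\alpha)^{-1}q_i(\alpha)=1$ and $\overline{D_j}(\overline{s_i})=0$ for $j<i$. This is precisely a triangular slice system, so the $\overline{D_i}$ are independent over $\overline{A}$ (a dependence $\sum c_j\overline{D_j}=0$ with $c_j\in\overline{A}$ not all zero, taking the least $j$ with $c_j\neq0$ and evaluating on $\overline{s_j}$, forces $c_j=0$). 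Combined with the first direction this yields the biconditional; note this argument also re-proves (1)(a) of Theorem \ref{CDH2} via Proposition \ref{tribas} applied to $\overline{A}$ with $\overline{A}^*=k^*$ (which holds since $A^*=k^*$ and $\overline{A}$ is a domain — one should check $\overline{A}$ is a domain, i.e. $f-\alpha$ is prime, which is exactly the irreducibility of $f-\alpha$ from factorial closure noted after Claim 1).

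**Main obstacle.** The subtle point is the ``if'' direction: from $q_i(\alpha)=0$ I must genuinely produce a \emph{nonzero} $\overline{A}$-linear relation among $\overline{D_1},\dots,\overline{D_n}$, not merely observe that the naive slice construction breaks down. The risk is that $\overline{p_i}$ could become constant (or even $0$) in $\overline{A}$, or that the residues $\overline{D_i}$ secretly admit a different slice system coming from outside $\sum k[f]s_i$. I would handle this by first establishing that $\overline{A}^{\,\overline{D_1},\dots,\overline{D_{i-1}}}$ is still ``small'' — transcendence degree $n-i+2$ over $k$ in $\overline{A}$ minus one, so that the vanishing of $\overline{D_i}$'s image forces a drop — and by using factorial closure plus $\overline{A}^*=k^*$ to control units. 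The honest engine is Proposition \ref{tribas}: \emph{if} the $\overline{D_i}$ were independent, that proposition (valid since hypotheses (P) transfer to $\overline{A}$) would force each $\mathcal{J}_i$ to generate the unit ideal after reduction mod $f-\alpha$, i.e. $q_i(\alpha)\in k^*$ for all $i$; so the contrapositive is exactly the ``if'' direction, and the whole obstacle dissolves into verifying that Proposition \ref{tribas}'s hypotheses hold for $\overline{A}$ — principally that $\overline{A}$ is a finitely generated domain with $\overline{A}^*=k^*$ and that the $\overline{D_i}$ form a triangular basis of the appropriate $(n$-dimensional$)$ Lie algebra acting on the $n$-dimensional variety $\mathrm{Spec}\,\overline{A}$.
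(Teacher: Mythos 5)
Your second direction (all $q_i(\alpha)\neq 0$ implies independence, equivalently dependence implies some $q_i(\alpha)=0$) is essentially the paper's argument, but with one index slip: given a relation $\sum c_j\overline{D_j}=0$ you must evaluate at $\overline{p_j}$ for the \emph{largest} $j$ with $c_j\neq 0$, not the least. The definition of $\mathcal{P}_j$ only controls $D_l(p_j)$ for $l\le j$ (it vanishes for $l<j$ and equals $q_j(f)$ for $l=j$); for $l>j$ nothing is known, so your "least $j$" evaluation leaves uncontrolled terms $c_l\overline{D_l}(\overline{s_j})$ with $l>j$. With the largest index the argument closes exactly as in the paper.

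The genuine gap is in your "if" direction. You propose to prove the contrapositive by applying Proposition \ref{tribas} to $\overline{A}$ and asserting that independence of the $\overline{D_i}$ "forces each $\mathcal{J}_i$ to generate the unit ideal after reduction," i.e. $q_i(\alpha)\in k^{\ast}$. But $\mathcal{J}_i=D_i(\mathcal{P}_i)$ is an ideal of $k[f]$ defined by preslices living in $A$; a slice system for the $\overline{D_i}$ in $\overline{A}$ (even granting Proposition \ref{tribas} applies) need not lift to elements of $\mathcal{P}_i$ upstairs, since a preimage $t_i\in A$ of such a slice only satisfies $D_j(t_i)\equiv 0$ and $D_i(t_i)\equiv 1$ modulo $f-\alpha$, not exactly. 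Without a lifting argument you cannot conclude $1\in\mathcal{J}_i\bmod(f-\alpha)$, so the "main obstacle" does not dissolve. (A secondary problem: the hypothesis $\overline{A}^{\ast}=k^{\ast}$ needed for Proposition \ref{tribas} does not follow from $A^{\ast}=k^{\ast}$ plus $\overline{A}$ being a domain; compare $k[x,y,z]/(xy-1)$.) The paper's proof of this direction is direct and bypasses Proposition \ref{tribas} entirely: if $q_i(\alpha)=0$ then $\overline{D_i}$ annihilates $\overline{p_i},\ldots,\overline{p_n}$ (using $D_i(p_j)=0$ for $j>i$ and $\overline{D_i}(\overline{p_i})=q_i(\alpha)=0$), and since these generate a subring of $\overline{A}_i$ of full transcendence degree, in characteristic zero $\overline{D_i}$ vanishes on all of $\overline{A}_i$; reversing the forward computation then exhibits a nontrivial linear relation. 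You should replace your contrapositive-via-Proposition-\ref{tribas} step with an argument of this kind.
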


\begin{proof}
($\Rightarrow $): \ Suppose that $0\not=D:=g_{1}D_{1}+\ldots +g_{n}D_{n}$
satisfies $\overline{D}=0$ where $g_{i}\in A$, and not all $\overline{g}_{i}=%
\bar{0}$. Let $i$ be the highest such that $\bar{g}_{i}\not=\bar{0}$. Then $%
0=\overline{D(p_{i})}=\bar{g}_{i}\bar{D}\bar{p}_{i}=\bar{g}_{i}\overline{%
q_{i}(f)}$. Since $\bar{A}$ is a domain, $q_{i}(\alpha )=\overline{q_{i}(f)}%
=0$. \newline
($\Leftarrow $): Assume $f-\alpha $ divides $q_{i}(f)$. We need to show that
the $\overline{D_{i}}$ are linearly dependent over $A/(f-\alpha )$. Consider 
$\bar{D}_{i}$ restricted to $\bar{A}_{i}$. If $j>i$ then $\bar{D}_{i}(p_{j})=%
\overline{D_{i}(p_{j})}=\overline{0}$. Furthermore $\bar{D}_{i}(\bar{p}_{i})=%
\bar{q}_{i}(f)=q(\alpha )=0$. Hence, $\bar{D}_{i}$ is zero if restricted to $%
k[\bar{p}_{i},\ldots ,\bar{p}_{n}]$. But since this is of transcendence
degree $n$, it follows that $\bar{D}_{i}=0$ on $\bar{A}_{i}$. \ Reversing
the argument of ($\Rightarrow $) yields the linear dependence of the $\bar{D}%
_{i}.$
\end{proof}

\begin{proof}
\emph{(of theorem \ref{CDH2a})} Part 1: If $\bar{D}_{1},\ldots ,\bar{D}_{n}$
are independent, then Proposition 1 yields that $\bar{A}\cong k^{[n]}$.
Lemma \ref{BLA} states that for any point $\alpha $ outside the zero set of $%
q_{1}q_{2}\cdots q_{n}$ we have $A/(f-\alpha )\cong k^{[n]}$. This zero set
is either all of $k$ or finite, yielding part 1.\newline
Part 2: Lemma \ref{BLA} tells us directly that for each $1\leq i\leq n$ and $%
\alpha \in k$, we have $q_{i}(\alpha )\not=0$. But this means that the $%
q_{i}\in k^{\ast }$, so the $p_{i}$ can be taken to be actual slices ($%
s_{i}=p_{i})$. \ Using the fact that $s_{i}\in A_{i}$ we obtain that $%
A=A_{1}=A_{2}[s_{1}]=A_{3}[s_{2},s_{1}]=\ldots =A_{n+1}[s_{1},\ldots
s_{n}]=k[s_{1},\ldots ,s_{n},f]$ as claimed$.$
\end{proof}

\section{Consequences of the main theorems}

\label{Sec4}

This paper is originally motivated by the following result of \cite{Mau}:

\begin{theorem}
Let $A=k[x,y,z]$ and $D_{1},D_{2}$ two commuting locally nilpotent
derivations on $A$ which are linearly independent over $A$. Then $%
A^{D_{1},D_{2}}=k[f]$ and $f$ is a coordinate.
\end{theorem}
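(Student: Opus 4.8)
The plan is to deduce this from the Main Theorem (Theorem \ref{General}), or equivalently from Theorem \ref{CDH2a}, by checking that the hypotheses are met. Here $U = \mathcal{G}_a^2$ acts on $X = \mathbb{A}^3$ via the two commuting locally nilpotent derivations $D_1, D_2$, so $n = 2$ and $\dim X = 3 = n+1$. The ring $A = k[x,y,z]$ is factorial and $A^\ast = k^\ast$, so condition (Q1) holds. The key point to verify is that the hypothesis on linear independence over $A$ translates into the freeness (or at least the trivial-stabilizer) hypothesis of Theorem \ref{General}(2). So first I would show: since $D_1, D_2$ are linearly independent over $A$, the induced vector fields span a rank-$2$ subsheaf of the tangent sheaf, and at a generic point the two tangent directions $D_1|_x, D_2|_x$ are linearly independent in $T_x X$; this forces the orbit map $\mathcal{G}_a^2 \to X$, $u \mapsto ux$, to be an immersion at the identity for generic $x$, hence (using that $\mathcal{G}_a^2$ is connected and the orbit is an affine space, so the stabilizer is trivial as soon as the orbit is $2$-dimensional) the stabilizer of such an $x$ is trivial.

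Having established that some point has trivial stabilizer, Lemma 1 (or Remark 3) gives $A^{D_1,D_2} = A^U = k[f]$ for an irreducible $f \in A \setminus k$, which is the first assertion. For the second assertion — that $f$ is a coordinate — I would invoke the constructive Theorem \ref{CDH2a}. By Lemma \ref{BLA}, there exist $p_i \in \mathcal{P}_i$ and $q_i \in k^{[1]} \setminus \{0\}$ with $D_i(p_i) = q_i(f)$, and the $\overline{D_1}, \overline{D_2}$ are linearly dependent modulo $f - \alpha$ precisely when $q_1(\alpha)q_2(\alpha) = 0$. To apply part (2) of Theorem \ref{CDH2a}, I must rule out the existence of any $\alpha$ with $q_i(\alpha) = 0$, i.e. show $q_1, q_2 \in k^\ast$. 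Here is where the linear independence over $A$ (the strong hypothesis, valid over all of $A$, not just generically) is essential: if $q_i(\alpha) = 0$ for some $\alpha$, then by Lemma \ref{BLA} the derivations $\overline{D_1}, \overline{D_2}$ on $\overline{A} = A/(f-\alpha)$ are linearly dependent, so there is a relation $g_1 D_1 + g_2 D_2 \equiv 0 \pmod{f - \alpha}$ with the $g_i \in A$ not all divisible by $f - \alpha$; I need to upgrade this to a genuine linear dependence $h_1 D_1 + h_2 D_2 = 0$ over $A$ itself, contradicting the hypothesis.

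The main obstacle, then, is precisely this last step: going from a dependence modulo $f - \alpha$ to a dependence over $A$. My plan here exploits that $D_1, D_2$ commute and that $k[f] = \ker D_1 \cap \ker D_2$ is factorially closed in the UFD $A$. Concretely: the module of locally nilpotent (or arbitrary) derivations killing nothing — better, consider the $A$-module $M$ of derivations of $A$ lying in $k(f) D_1 + k(f) D_2$ that actually map $A$ into $A$; this is reflexive, and over the PID-like structure along the fiber I can argue that a dependence $\overline{D_1}, \overline{D_2}$ produces, after clearing denominators and using factorial closure, an element of $A D_1 + A D_2$ that vanishes. Alternatively, and perhaps more cleanly, I would argue directly with the $p_i$: because $D_1, D_2$ commute, one checks (as in the proof of Proposition \ref{tribas}) that $p_1, p_2$ can be chosen with $D_i(p_j) = \delta_{ij} q_i(f)$, so if some $q_i(\alpha) = 0$ the common fiber $f^{-1}(\alpha)$ would carry a nontrivial infinitesimal stabilizer, contradicting that $f^{-1}(\alpha)$ is a single $\mathcal{G}_a^2$-orbit of dimension $2$ (which follows from Claim 2 of Theorem \ref{General} together with trivial-stabilizer-on-a-generic-point plus the fact that all fibers are $2$-dimensional and irreducible). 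Once $q_1, q_2 \in k^\ast$ is established, Theorem \ref{CDH2a}(2) gives $A = k[s_1, s_2, f]$ with $s_i = p_i$ actual slices, so $f$ is a coordinate, completing the proof.
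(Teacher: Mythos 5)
Your first step is sound: linear independence of $D_1,D_2$ over $A$ (equivalently over $k(x,y,z)$) means the evaluation map $kD_1+kD_2\to T_xX$ is injective at a generic $x$, so a generic point has finite, hence trivial, stabilizer (unipotent groups in characteristic zero are torsion-free), and Lemma 1 then gives $A^{D_1,D_2}=k[f]$. The fatal problem is the second half. Your entire strategy for showing $f$ is a coordinate rests on the claim that a linear dependence of $\overline{D_1},\overline{D_2}$ modulo some $f-\alpha$ can be ``upgraded'' to a dependence $h_1D_1+h_2D_2=0$ over $A$, i.e.\ that all the $q_i$ lie in $k^{\ast}$ so that Theorem \ref{CDH2a}(2) applies. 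This is false, and the paper's own Example \ref{Ex2} is the counterexample: $D_1=Z\partial_X$ and $D_2=\partial_Y$ commute, are linearly independent over $k[X,Y,Z]$, have common kernel $k[Z]$, and yet $\overline{D_1}=0$ modulo $Z$, so the pair is dependent on the fiber $\alpha=0$ (here $q_1$ is $Z$ up to a unit, not a constant). Your fallback --- that each fiber $f^{-1}(\alpha)$ is a single two-dimensional orbit, so a dependence there would contradict triviality of stabilizers --- fails for the same reason: Claim 2 in the proof of Theorem \ref{General} is established only under the freeness hypothesis of part (2), and in Example \ref{Ex2} the fiber $Z=0$ is a union of one-dimensional orbits, not a single orbit. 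Freeness is not among your hypotheses and cannot be deduced from linear independence over $A$.

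What the machinery of this paper actually yields is only Theorem \ref{CDH2a}(1): $A/(f-\alpha)\cong k^{[2]}$ for all but finitely many $\alpha$. The missing ingredient that closes the gap is Kaliman's theorem \cite{Kal} (a polynomial in three variables whose general fibers are planes is a variable), which is how the result is proved in \cite{Mau}; the paper does not reprove the statement but quotes it, noting explicitly that Kaliman's theorem is ``the most important ingredient.'' No argument internal to this paper handles the finitely many bad fibers --- indeed the point of Section 5 is that without Kaliman (i.e.\ in dimension at least $4$) the analogous statement is open and on a par with the Sathaye conjecture. Your proof as written would, if correct, give an elementary proof of that conjecture in dimension three, which should itself have been a warning sign.
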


Here the notation $A^{D_{1},D_{2}}$ means $A^{D_{1}}\cap A^{D_{2}}$ the
intersections of the kernels of $D_{1}\ $and $D_{2}$, which is the set of
elements vanishing under $D_{1}$ resp. $D_{2}$. (Note that for the $\mathcal{%
G}_{a}$ action associated to $D$, this notation means $\mathcal{O}(X/%
\mathcal{G}_{a})=\mathcal{O}(X)^{\mathcal{G}_{a}}=\mathcal{O}(X)^{D}$). By a 
\textbf{coordinate} is meant an element $f$ in $k^{[n]}$ for which there
exist $f_{2},\ldots ,f_{n}$ with $k[f,f_{2},\ldots ,f_{n}]=k^{[n]}$.
Equivalently, $(f,f_{2},\ldots ,f_{n}):k^{[n]}\longrightarrow k^{[n]}$ is an
automorphism. The most important ingredient of this theorem is Kaliman's
theorem \cite{Kal}.

In \cite{Mau} it is conjectured that this result is true also in higher
dimensions, i.e. having $n$ commuting linearly independent locally nilpotent
derivations on $k^{[n+1]}$ should yield that their common kernel is
generated by a coordinate. However, it seems that this conjecture is very
hard, on a par with the well-known Sathaye conjecture:

\textbf{$SC(n)$ Sathaye-conjecture:} Let $f\in A:=k^{[n]}$ such that $%
A/(f-\lambda )\cong k^{[n-1]}$. Then $f$ is a coordinate.\newline

The Sathaye conjecture is proved for $n\leq 3$ by the aforementioned
Kaliman's theorem. \ Therefore, the original motivation was to find
additional requirements in higher dimensions to achieve the result that $f$
is a coordinate. The results in this paper give one such requirement, namely
that $k^{[n]}/(f-\lambda )\cong k^{[n-1]}$ for all constants $\lambda $.

Another consequence of the result of this paper is that the Sathaye
conjecture is equivalent to \newline

\noindent \noindent \textbf{$MSC(n)$ Modified Sathaye Conjecture:} Let $%
A:=k^{[n]}$, and let $f\in A$ be such that $A/(f-\alpha )\cong k^{[n-1]}$
for all $\alpha \in k$. Then there exist $n-1$ commuting locally nilpotent
derivations $D_{1},\ldots ,D_{n-1}$ on $A$ such that $A^{D_{1},\ldots
,D_{n-1}}=k[f]$ and the $D_{i}$ are linearly independent modulo $(f-\alpha )$
for each $\alpha \in k$.

\begin{proof}[Proof of equivalence of $SC(n)$ and $MSC(n)$.]
Suppose we have proven the $MSC(n)$. Then for any $f$ satisfying
\textquotedblleft $A/(f-\alpha )\cong k^{[n-1]}$ for all $\alpha \in k$%
\textquotedblright\ we can find commuting LNDs $D_{1},\ldots ,D_{n-1}$ on $A 
$ giving rise to a $\mathcal{G}_{a}^{n-1}$action satisfying the hypotheses
of Theorem 2. Applying this theorem, we obtain that $f$ is a coordinate in $%
A $. So the $SC(n)$ is true in that case.

Now suppose we have proven the $SC(n)$. Let $f$ satisfy the requirements of
the $MSC(n)$ , that is, \textquotedblleft $A/(f-\alpha )\cong k^{[n-1]}$ for
all $\alpha \in k$\textquotedblright . Since $f$ satisfies the requirements
of $SC(n)$, $f$ then must be a coordinate. So it has $n-1$ so-called mates: $%
k[f,f_{2},\ldots ,f_{n}]=k^{[n]}$. But then the partial derivative with
respect to each of these $n$ polynomials $f,f_{2},\ldots ,f_{n}$ defines a
locally nilpotent derivation. \ All of them commute, and the intersection of
the kernels of the last $n-1$ derivations is $k[f]$; so the MSC holds.
\end{proof}

\end{document}